\DeclareMathOperator*{\infd}{inf\vphantom{\operator@font p}}
\DeclareMathOperator*{\supx}{su\smash{\operator@font p}}
\newcommand{\R}{\mathbb{R}}
\newcommand{\ds}{\displaystyle}
\newcommand\Dg{\mathrm{Dg}}
\newcommand{\DgPD}[1]{\Dg{ #1}}
\newtheorem{thm}{Theorem}
\newtheorem{lem}[thm]{Lemma}
\newtheorem{cor}[thm]{Corollary}
\newtheorem{defn}[thm]{Definition}
\newcommand {\mm}[1] {\ifmmode{#1}\else{\mbox{\(#1\)}}\fi}
\newcommand{\Rspace}        {\mm{{\mathbb R}}}
\newcommand{\Xspace}        {\mm{{\mathbb X}}}
\newcommand{\Yspace}        {\mm{{\mathbb Y}}}
\newcommand{\Kspace}        {\mm{{\mathbb K}}}
\newcommand{\Vspace}        {\mm{{\mathbb V}}}
\newcommand{\Ispace}        {\mm{{\mathbb I}}}
\newcommand{\Uspace}        {\mm{{\mathbb U}}}
\newcommand{\Xbf}        {\mm{{\textbf X}}}
\newcommand{\Ybf}        {\mm{{\textbf Y}}}
\newcommand{\Hgroup}        {\mm{{\mathsf H}}}
\newcommand{\id}        {\mm{\mathrm{id}}}
\begin{document}

\title{Local Versus Global Distances for Zigzag Persistence Modules}

\author[1]{Ellen Gasparovic\thanks{gasparoe@union.edu}}
\author[2]{Maria Gommel\thanks{maria-gommel@uiowa.edu}}
\author[3]{Emilie Purvine\thanks{emilie.purvine@pnnl.gov}}
\author[4]{Radmila Sazdanovic\thanks{rsazdanovic@math.ncsu.edu}}
\author[5]{Bei Wang\thanks{beiwang@sci.utah.edu}}
\author[6]{Yusu Wang\thanks{yusu@cse.ohio-state.edu}}
\author[7]{Lori Ziegelmeier\thanks{lziegel1@macalester.edu}}
\affil[1]{Union College, Schenectady, NY}
\affil[2]{University of Iowa, Iowa City, IA}
\affil[3]{Pacific Northwest National Laboratory, Seattle, WA}
\affil[4]{North Carolina State University, Raleigh, NC}
\affil[5]{University of Utah, Salt Lake City, UT}
\affil[6]{Ohio State University, Columbus, OH}
\affil[7]{Macalester College, Saint Paul, MN}

\maketitle

\begin{abstract}
This short note establishes
explicit and broadly applicable relationships between persistence-based distances computed locally and globally. In particular, we show that the bottleneck distance between two zigzag persistence modules restricted to an interval is always bounded above by the distance between the unrestricted versions.
While this result is not surprising, it could have different practical implications. We give two related applications for metric graph distances, as well as an extension for the matching distance between multiparameter persistence modules.

\noindent \textbf{Keywords:} zigzag persistent homology, level set zigzag, bottleneck distance, metric graphs
\end{abstract}

\section{Introduction}
\label{sec:introduction}

\paragraph{Persistence modules and zigzag persistence}
The theory of persistence modules is at the core of topological data analysis. 
The theory begins with the study of 1-parameter persistence modules over $\Rspace$-valued functions. 
In the ordinary setting, given a diagram of topological spaces connected via inclusion maps,
\[\Xspace_1 \to \Xspace_2 \to \cdots \to \Xspace_n,\] we apply the $p$-dimensional homology functor $\Hgroup_p$ with coefficients in a field $\Kspace$ to obtain a diagram of vector spaces with linear maps, 
\[\Vspace_1 \to \Vspace_2 \to \cdots \to \Vspace_n,\] where $\Vspace_i = \Hgroup_p(\Xspace_i; \Kspace)$. 
Such a diagram is called a 1-parameter persistence module~\cite{CarlssonSilva2010}. 
Various persistence modules generalizing the 1-parameter setting have been studied in the literature, 
including generalized~\cite{BubenikSilvaScott2015} (i.e.,~over posets), zigzag~\cite{BotnanLesnick2018,CarlssonSilva2010} persistence modules, and multiparameter~\cite{Lesnick2015} (i.e.,~over $\Rspace^d$-valued functions); see~\cite{BubenikVergili2018} for a description of their relationships. 

We focus on zigzag persistence modules, which, in a nutshell, allow arrows to point in either direction~\cite{CarlssonSilva2010}. 
Given a diagram of topological spaces connected by inclusion maps,
\[\Xspace_1 \leftrightarrow \Xspace_2 \leftrightarrow \cdots \leftrightarrow  \Xspace_n,\] we apply the homology functor as usual to obtain a sequence of vector spaces and linear maps, \[\Vspace_1 \leftrightarrow \Vspace_2 \leftrightarrow \cdots \leftrightarrow \Vspace_n,\]  where each $\leftrightarrow$ represents either a forward or a backward map. 
Zigzag persistence modules generalize the classic 1-parameter setting and handle several situations which are not covered by the classic theory. 
Linearity allows a zigzag persistence module (similar to a 1-parameter persistence module) to be uniquely decomposed  into elementary  pieces (called indecomposable modules) which are intervals. The information encoded by these intervals can be combinatorially represented by the persistence diagram~\cite{EdelsbrunnerMorozov2017}.
In the case of multiparameter persistence, such indecomposable modules are complex and no longer intervals. 
We are interested in zigzag persistence as it involves the most general type of linear module that still gives rise to classic persistence diagrams. 
Furthermore, a zigzag persistence module can be used to compute ordinary persistent homology with good space efficiency (see Section~\ref{sec:bottleneck} for details). 

To measure the distance between persistence modules, the notion of \emph{interleaving distance} has been employed~\cite{ChazalCohen-SteinerGlisse2009} which captures the proximity between persistence modules.  
For 1-parameter persistence modules, it has been shown that the interleaving distance is equal to the well-known \emph{bottleneck distance}~\cite{Cohen-SteinerEdelsbrunnerHarer2007} between the persistence diagrams of the corresponding persistence modules~\cite{Lesnick2015}. 
In this paper, we prove a straightforward inequality involving the bottleneck distance between persistence diagrams \cite{Cohen-SteinerEdelsbrunnerHarer2007} that is useful for data analysis. 

\paragraph{Global versus local perspectives on persistence} 
We are motivated by the study of persistence modules from both global and local perspectives. 
A persistence module provides a global 
description of a complex dataset, and we are interested in quantifying the amount of information that is preserved when restricted to local neighborhoods or intervals. 

For a first example, consider the question of determining or approximating graph motif counts.
A graph motif is a subgraph on a small number of vertices contained within a larger, more complex graph. 
Graph motifs have proven useful for characterizing networks in domains like biology \cite{Shen-OrrMiloMangan2002} and cyber security \cite{HarshawBridgesLannacone2016}.
The standard problem of counting the number of small motifs or patterns within a graph is equivalent to the subgraph isomorphism problem, which is NP-complete. 
Since restricted persistence modules reveal information about the local structure of a space, we posit that the restricted modules for a metric graph (see Section \ref{sec:applications}) can be used similarly to how graph motifs are currently used, e.g., as inputs to classification algorithms or anomaly detection algorithms in time-varying data  \cite{HarshawBridgesLannacone2016,Milo824}. 

For a second example, consider persistent local homology, which studies a multi-scale notion of homology within a local neighborhood of the data relative to its boundary. 
It has applications in road network analysis~\cite{AhmedFasyWenk2014}, local dimension estimation~\cite{DeyFanWang2014}, data visualization~\cite{WangSummaPascucci2011}, graph reconstruction~\cite{ChernovKurlin2013,AanjaneyaChazalChen2012}, clustering and stratification learning~\cite{BendichMukherjeeWang2012, BendichCohen-SteinerEdelsbrunner2007}. 
Furthermore, persistent local homology extracts local geometric and topological information in data, which can be used as input to machine learning algorithms~\cite{BendichGasparovicHarer2015}.


\paragraph{Our contributions}
We show that the bottleneck distance between two zigzag persistence modules restricted over an interval of parameter values is always bounded by the distance between the unrestricted versions (Theorem~\ref{thm:changeradius}) and state a corollary in the case of \emph{level set zigzag persistence} (Corollary~\ref{cor:levelsetzigzag}). We also establish two results involving distance inequalities in the special case of metric graphs (Corollary~\ref{cor:changebasepoint} and Corollary~\ref{cor:localpd}) and point out how our results can be extended to multiparameter persistence modules. 

The results in this short paper have the potential for many diverse applications across different settings. 
For instance, 
if one wishes to compare the persistence profiles of two very large data sets but finds that it is prohibitively computationally expensive, one has the option to compute a restricted version of the bottleneck distance as an approximation to the global distance. 
As the interval size increases, the bottleneck distance between the restricted versions approaches the distance for the global versions.

Relatedly, it may be the case that two long zigzag sequences need to be compared on a local scale. The question may be: are there any local differences between the two zigzag sequences? One could do many local comparisons to answer this question. However, our result means that a small global distance between the two zigzag persistence diagrams implies small local distances. To save computation one could compute the global distance as a first step. Local distances only need to be computed if the global distance is large.

 Restricted persistence modules may be helpful for analyzing time-varying systems. Given data $\Xspace_t$ at time $t$ (e.g., a graph, function, or point cloud), a zigzag persistence module can be constructed for the sequence \[\Xspace_1 \rightarrow \Xspace_1 \cup \Xspace_2 \leftarrow \Xspace_2 \rightarrow \Xspace_2 \cup \Xspace_3 \leftarrow \cdots\]
where all of the maps are inclusion maps. A subinterval of this sequence corresponds to a time interval contained within the larger sample. Given two long time intervals, one could either compare them in full or compare smaller windows. Our result shows that the local differences contained in small time intervals are not ``washed out'' as one moves to larger intervals.

The rest of the paper is organized as follows. In Section \ref{sec:background}, we recall the necessary concepts for zigzag persistence. Our main theorem is contained in Section \ref{sec:bottleneck}, and we consider applications of the theorem in the metric graph setting and for multi-parameter persistence in Section~\ref{sec:applications}. We conclude with a discussion of future work 
in Section \ref{sec:conclusion}.

\section{Brief Background and Definitions}
\label{sec:background}

Our treatment of zigzag persistence is brief; for more details, see~\cite{CarlssonSilva2010} and~\cite{CarlssonSilvaMorozov2009}. 
A
\emph{zigzag diagram} of topological spaces $\Xspace_1,\Xspace_2,\ldots,\Xspace_n$ is a sequence
\[\Xspace_1 \leftrightarrow \Xspace_2 \leftrightarrow \cdots \leftrightarrow \Xspace_n\]
where each bidirectional arrow between two topological spaces represents a continuous function mapping either forwards or backwards. Applying the $p$-th homology functor with
coefficients in a field $\mathbb{K}$ yields a zigzag
diagram of vector spaces \[\Hgroup_p(\Xspace_1) \leftrightarrow \Hgroup_p(\Xspace_2) \leftrightarrow \cdots \leftrightarrow \Hgroup_p(\Xspace_n),\] known as a \emph{zigzag module}, denoted as $\Xbf$, 
from which \emph{zigzag persistence} may be computed. A zigzag module   decomposes into intervals $\Xbf \cong \displaystyle \bigoplus_{j \in J} \Ispace[b_j, d_j]$, where each $\Ispace[b_j, d_j]$ is defined as 
\[0 \longleftrightarrow \cdots \longleftrightarrow 0 \longleftrightarrow \mathbb{K} \longleftrightarrow \cdots \longleftrightarrow \mathbb{K} \longleftrightarrow 0 \cdots \longleftrightarrow 0 \]
with nonzero values in the range $[b_j,d_j]$. We will use $\DgPD{\Xbf}$ to denote the resulting \emph{persistence diagram} of a fixed homology dimension $p$. By Proposition 2.12 of \cite{CarlssonSilva2010}, restricting the module $\Xbf$ to the range $[r_1,r_2]$ (denoted $\Xbf[r_1,r_2]$) yields a decomposition as the direct sum of the intervals in $\Xbf$ restricted to $[r_1,r_2]$; that is, 
\begin{align}\label{eqn:restrictioin}
    \Xbf[r_1,r_2] \cong  \displaystyle \bigoplus_{j \in J} \Ispace([b_j, d_j] \cap [r_1,r_2]).
\end{align}

The \emph{bottleneck distance} between two persistence diagrams is equal to $\delta$ if there exists a matching between the points of the two diagrams (where points are allowed to be matched to diagonal elements) such that any pair of matched points are at distance at most $\delta$.
Formally, for a fixed homology dimension, 
the bottleneck distance is given by
\[
d_B(\Dg \Xbf, \Dg \Ybf) = \inf_{\mu}\sup_x ||x - \mu(x)||_{\infty},
\]
where $\mu$ ranges over all bijections between the two diagrams~\cite{EdelsbrunnerHarer2008}.


We conclude this section by defining a projection map that keeps track of the points in the global persistence diagram that disappear in the restricted version. 
The validity of the projection map in the following definition is guaranteed by Proposition 2.12 of \cite{CarlssonSilva2010} which leads to equation (\ref{eqn:restrictioin}). 

\begin{defn}\label{Res} 
Given $I=[r_1,r_2] \subset \Rspace$, we let $\DgPD{\Xbf^I}$ denote the \textbf{restriction of the persistence diagram $\DgPD{\Xbf}$ to the interval $I$} defined via the following projection map:

\noindent\parbox{0.45\textwidth}{
\vspace{2mm}
\begin{align*}
\Pi: \DgPD{\Xbf}  & \to \DgPD{\Xbf^I} \\
(b,d)  & \mapsto \begin{cases} (b,d) \text{ if } r_1 \leq b \leq d \leq r_2 \text{ (Case A)}\\
(b,r_2) \text{ if } r_1 \leq b \leq r_2 \leq  d \text{ (Case B)}\\
(r_1,d) \text{ if } b \leq r_1 \leq d \leq  r_2 \text{ (Case C)}\\
(r_1,r_2) \text{ if } b \leq r_1 \leq r_2 \leq  d \text{ (Case D)}\\
(b,b) \text{ if }  r_2 \leq b \text{ (Case E)}\\
(d,d) \text{ if }  d \leq r_1 \text{ (Case F)}\\
\end{cases}
\end{align*}}
\parbox{0.45\textwidth}{
\vspace{2mm}
\centering
\includegraphics[width=0.45\textwidth]{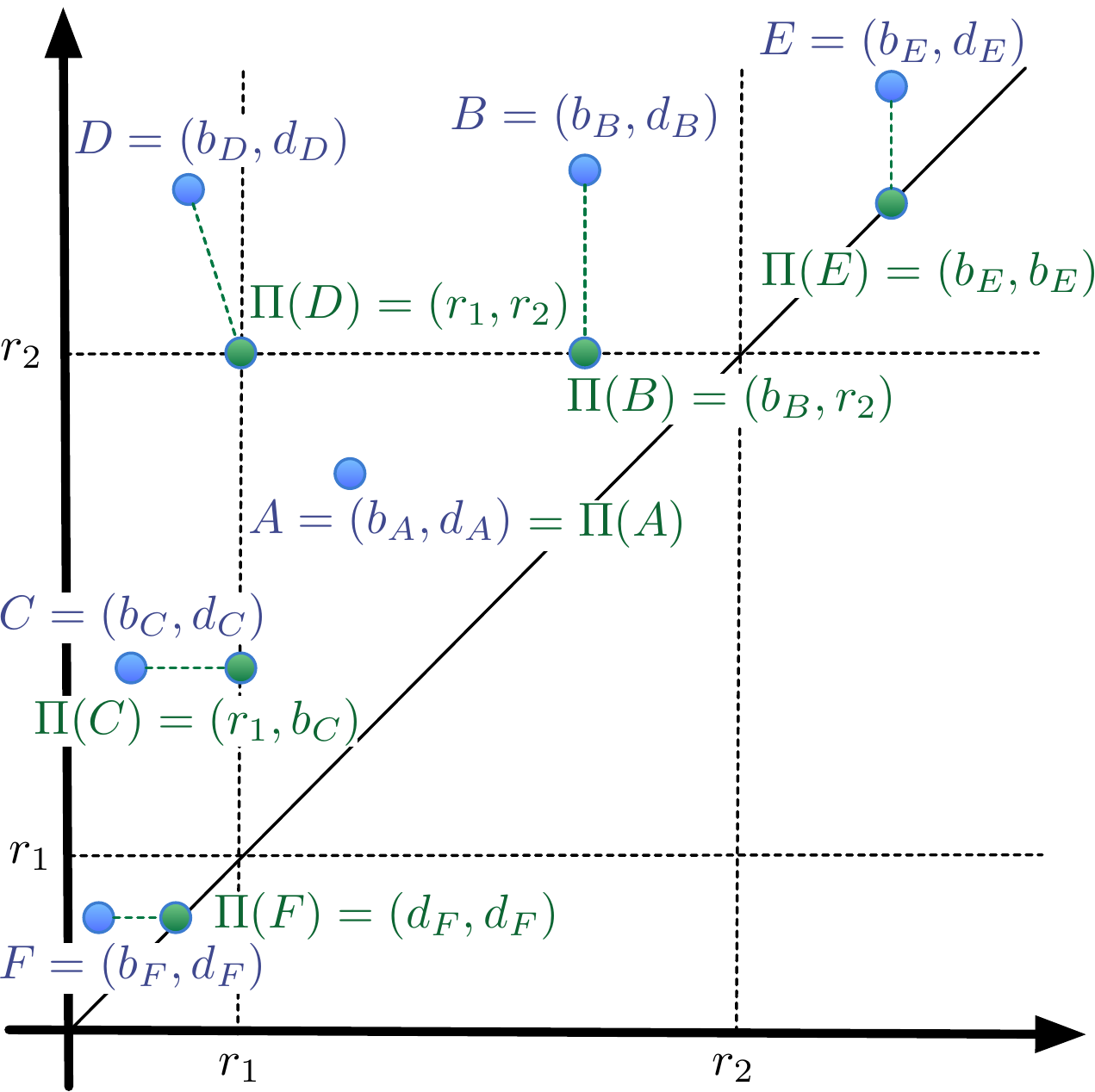}
\label{fig:projections}
}\end{defn}


Typically, a persistence diagram is considered to be a set of points $\{(b, d)\}$ for which $b < d$. 
In order to compute the bottleneck distance, one adds countably many copies of the diagonal $\{(x, x) : x \in \mathbb{R}\}$, which may intuitively correspond to topological features that are born and simultaneously die (and thus, never really exist at all). 
This allows for a point in one persistence diagram to be matched to the diagonal if it is far away from any point in the other diagram, and also accounts for the fact that two persistence diagrams may have different numbers of off-diagonal points. 
Notice that points like $E$ and $F$ in the above figure correspond to features that are born and die outside of the interval $I$ (either completely before or completely after). 
The restriction result cited above from \cite{CarlssonSilva2010}, defining $\Uspace[r_1, r_2]$, would not include points $\Pi(E)$ or $\Pi(F)$ in its diagram. 
But, since both $\Pi(E)$ and $\Pi(F)$ are on the diagonal, including them in $\DgPD \Xbf^I$ does not change the bottleneck distance between two restricted diagrams.



\section{Bottleneck Distance in the Local vs. Global Settings}
\label{sec:bottleneck}


\noindent In this section, we prove our main result relating the bottleneck distance between persistence diagrams with the bottleneck distance between their interval-restricted versions.


\begin{thm}\label{thm:changeradius}
Let $\Xspace_1 \leftrightarrow \Xspace_2 \leftrightarrow \ldots \leftrightarrow \Xspace_n$ and $\Yspace_1 \leftrightarrow \Yspace_2 \leftrightarrow \ldots \leftrightarrow \Yspace_n$ be two sequences of topological spaces and continuous maps, and let
      $\DgPD{\Xbf}$ and $\DgPD{\Ybf}$ be their corresponding zigzag persistence diagrams. Consider the interval $I=[r_1, r_2] \subset \Rspace$ and let $\DgPD{\Xbf^I}$ and $\DgPD{\Ybf^I}$ be the restrictions of these diagrams to $I$. Then $d_B(\DgPD{\Xbf^I},\DgPD{\Ybf^I}) \leq d_B(\DgPD{\Xbf},\DgPD{\Ybf}).$
\end{thm}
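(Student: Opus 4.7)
The plan is to push an optimal matching between the unrestricted diagrams forward through the projection map $\Pi$ of Definition~\ref{Res} to obtain a matching of no greater cost between the restricted diagrams. Let $\delta = d_B(\DgPD{\Xbf}, \DgPD{\Ybf})$, fix any $\epsilon > \delta$, and select a bijection $\mu : \DgPD{\Xbf} \to \DgPD{\Ybf}$ (each diagram viewed as a multiset augmented with countably many diagonal copies) satisfying $\sup_p \|p - \mu(p)\|_\infty < \epsilon$. For each matched pair $(p, \mu(p))$, declare $\Pi(p)$ to be matched with $\Pi(\mu(p))$; equation~(\ref{eqn:restrictioin}) guarantees that the off-diagonal points of the restricted diagrams are precisely the nontrivial $\Pi$-images of the off-diagonal points of the unrestricted diagrams, and the countably many diagonal copies absorb any collisions or extra diagonal images, so this produces a genuine bijection $\mu'$ between $\DgPD{\Xbf^I}$ and $\DgPD{\Ybf^I}$.

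The technical core is the 1-Lipschitz estimate
\[
\|\Pi(p) - \Pi(q)\|_\infty \leq \|p - q\|_\infty
\]
for all $p = (b_p, d_p)$ and $q = (b_q, d_q)$ with birth coordinate at most death coordinate. To avoid a brute-force $6 \times 6$ case analysis, I would compare $\Pi$ with the coordinatewise clamp $c(b, d) = (\max(r_1, \min(r_2, b)), \max(r_1, \min(r_2, d)))$, which is trivially 1-Lipschitz in $\ell_\infty$ since one-dimensional clamping is 1-Lipschitz. In cases A--D of Definition~\ref{Res}, $\Pi$ and $c$ coincide; in cases E and F, both $\Pi(p)$ and $c(p)$ lie on the diagonal (at $(b,b)$ or $(d,d)$ for $\Pi$, and at $(r_2, r_2)$ or $(r_1, r_1)$ for $c$). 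Because $d_B$ is insensitive to which diagonal points populate a diagram---any diagonal point may be rematched to any other at zero cost, and the diagrams are already padded with countably many diagonal copies---substituting $c$ for $\Pi$ leaves $d_B(\DgPD{\Xbf^I}, \DgPD{\Ybf^I})$ unchanged, and with $c$ the Lipschitz inequality is immediate.

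Once the Lipschitz bound is established, the pushed-forward matching $\mu'$ satisfies $\sup_x \|x - \mu'(x)\|_\infty < \epsilon$, whence $d_B(\DgPD{\Xbf^I}, \DgPD{\Ybf^I}) < \epsilon$. Letting $\epsilon \downarrow \delta$ yields the stated inequality. The main obstacle I anticipate is not the Lipschitz bound itself but the bookkeeping around the diagonal identifications: one must carefully justify that replacing $\Pi$ with $c$ preserves the value of $d_B$, which ultimately reduces to observing that the two constructions produce diagrams with identical off-diagonal multisets, and that the bijection framework for bottleneck distance is agnostic to the placement of diagonal points.
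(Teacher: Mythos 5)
Your proposal is correct, and it takes a genuinely different (and cleaner) route through the key estimate than the paper does. Both arguments begin identically: push a near-optimal bijection $\mu$ between $\DgPD{\Xbf}$ and $\DgPD{\Ybf}$ forward through the restriction map to produce a candidate matching between the restricted diagrams, then show the cost does not increase. Where the paper diverges is in Lemma~\ref{lem:suprestriction}: it verifies the cost bound by an explicit case analysis over the $21$ unordered pairs of types A--F (it works out a representative handful, with ``Case B vs.\ B, C, D, E, F,'' and defers the rest as ``similar''). You instead replace $\Pi$ with the coordinatewise clamp $c(b,d) = (\max(r_1, \min(r_2, b)), \max(r_1, \min(r_2, d)))$, whose $1$-Lipschitz property in $\ell_\infty$ is immediate from the one-dimensional case, then observe that $\Pi$ and $c$ agree in cases A--D and both land on the diagonal in cases E--F, so the two produce restricted diagrams with identical off-diagonal multisets and hence the same bottleneck distance. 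This is a real simplification: the Lipschitz estimate comes essentially for free, and the only case analysis that remains is the trivial check that $\Pi = c$ off-diagonal. The one place where your write-up is slightly imprecise is in phrasing (``the pushed-forward matching $\mu'$ satisfies $\sup_x \|x - \mu'(x)\|_\infty < \epsilon$''): strictly speaking your argument establishes the cost bound for the $c$-pushed matching, not the $\Pi$-pushed one, and then transfers the conclusion to the $\Pi$-defined restricted diagrams via the diagonal-insensitivity of $d_B$. Worth cleaning up that sentence, but the logic underneath is sound. Note also that $\Pi$ is in fact itself $1$-Lipschitz in $\ell_\infty$ (which is what the paper's case analysis shows directly), so one could alternatively use your clamp comparison only as a heuristic and still state the lemma about $\Pi$; but your route of sidestepping that verification entirely is the more economical one.
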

%

\begin{proof}

Let $\mu \subseteq \DgPD{\Xbf} \times \DgPD{\Ybf}$ be a partial matching. 
For computation of the bottleneck distance, we say that any unpaired point in one of the persistence diagrams is matched to the nearest point (in the $L_\infty$ norm) on the diagonal $\Delta=\{(x,x):x\in \Rspace\}$. 

Consider $\hat{\mu} \subseteq \DgPD{\Xbf^I} \times \DgPD{\Ybf^I}$ defined such that, for each $(p, q) \in \mu$, we have $(\Pi(p), \Pi(q)) \in \hat{\mu}$. We claim that this is a valid partial matching between the two restricted diagrams. A partial matching means that no two points $\hat{p}, \hat{p}' \in \DgPD{\Xbf^I}$ are matched to the same $\hat{q} \in \DgPD{\Ybf^I}$, and similarly the same $\hat{p} \in \DgPD{\Xbf^I}$ is not matched to two different points $\hat{q}, \hat{q}' \in \DgPD{\Ybf^I}$. We show the first case and remark that the second case is proved similarly. Assume, for the sake of contradiction, that $(\hat{p}, \hat{q}), (\hat{p}', \hat{q}) \in \hat{\mu}$. By definition of $\hat{\mu}$ we must have $(p, q), (p', q') \in \mu$ such that $\hat{p} = \Pi(p)$, $\hat{p}' = \Pi(p')$, and $\hat{q} = \Pi(q) = \Pi(q')$. Recall that persistence diagrams are multisets, so the fact that $\hat{q} = \Pi(q) = \Pi(q')$ would indicate that there are two copies of the point $\hat{q}$ in $\hat{\mu}$, and that $\hat{p}$ is matched to one copy and $\hat{p}'$ is matched to the other copy. The only case in which we wouldn't have two copies of $\hat{q}$ is if $q=q'$, but this would contradict $\mu$ being a partial matching\footnote{Of course we could have $q=q'$ if they have the same coordinates, but if there are multiple copies of the same $(b, d)$ point, we count them as different points in the persistence diagram, and thus not equal.}.

What is left to show is that the maximal distance between matched points $\hat{\mu}$ is less than that for $\mu$, a fact proved in the following lemma. Indeed, if $\mu$ is the matching that achieves the bottleneck distance between $\DgPD{\Xbf}$ and $\DgPD{\Ybf}$ and the cost of $\hat{\mu}$ is smaller, then the bottleneck distance between $\DgPD{\Xbf^I}$ and $\DgPD{\Ybf^I}$ will only be smaller still. 
\end{proof}

\begin{lem}\label{lem:suprestriction}
For the partial matching $\hat{\mu}$,
$\displaystyle \sup_{(\hat{p},\hat{q})\in\hat{\mu}} ||\hat{p}-\hat{q}||_\infty \leq \displaystyle \sup_{(p,q)\in\mu} ||p-q||_\infty.$
\end{lem}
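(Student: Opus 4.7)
The plan is to establish a pointwise strengthening: for every $(p, q) \in \mu$, $\|\Pi(p) - \Pi(q)\|_\infty \leq \|p - q\|_\infty$. Since $\hat\mu$ is by construction the image of $\mu$ under the map $(p, q) \mapsto (\Pi(p), \Pi(q))$, every pair in $\hat\mu$ is of the form $(\Pi(p), \Pi(q))$ for some $(p, q) \in \mu$, so the lemma's supremum inequality follows term-by-term.

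To prove the pointwise bound, I would write $p = (b_1, d_1)$ and $q = (b_2, d_2)$ and case-analyze according to which of the six cases (A through F) of Definition~\ref{Res} contains each point. Whenever both $p$ and $q$ lie in cases A--D, the projection $\Pi$ coincides with the coordinate-wise clamp $(b, d) \mapsto (\max(r_1, \min(r_2, b)), \max(r_1, \min(r_2, d)))$; each coordinate of this clamp is 1-Lipschitz on $\Rspace$, and the $L_\infty$ norm is the maximum of coordinate-wise absolute differences, so the pointwise bound is immediate in every such subcase.

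The substantive work lies in the combinations in which at least one of $p, q$ falls into case E or F, where $\Pi$ sends the point onto the diagonal at $(b, b)$ or $(d, d)$ rather than to the clamped corner. In each such subcase, the defining hypothesis (for instance, $b_1 > r_2$ in case E, which forces $d_1 \geq b_1 > r_2$) provides enough ordering of $b_1, d_1, r_1, r_2, b_2, d_2$ to bound $\|\Pi(p) - \Pi(q)\|_\infty$ directly by $|b_1 - b_2|$ or $|d_1 - d_2|$, both of which are at most $\|p - q\|_\infty$. As a representative example, if $p$ is in case E and $q$ in case A, then $b_1 > r_2 \geq d_2 \geq b_2$, and hence $\|\Pi(p) - \Pi(q)\|_\infty = \max(b_1 - b_2, b_1 - d_2) = b_1 - b_2 \leq \|p - q\|_\infty$. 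Added diagonal points ($b = d$) fit into this framework uniformly, since each such point falls into case A, E, or F according to its position and is fixed by $\Pi$ in all three.

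The main obstacle is not any single difficult estimate but the sheer quantity of subcases: even after using the symmetry between $p$ and $q$ and the symmetric roles of cases E and F, one must dispatch roughly $21$ unordered combinations of cases. A cleaner alternative would be to redefine $\Pi$ on cases E and F as the ``corner clamp'' sending to $(r_2, r_2)$ or $(r_1, r_1)$, which would make $\Pi$ globally 1-Lipschitz as a product of two coordinate-wise clamps; staying faithful to Definition~\ref{Res} as written, however, requires the direct case-by-case verification sketched above.
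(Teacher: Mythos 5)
Your proposal is correct and shares the paper's high-level strategy (reduce to the pointwise bound $\|\Pi(p)-\Pi(q)\|_\infty\le\|p-q\|_\infty$ and case-analyze over the six regions), but you streamline it in a genuinely different way. Your observation that on cases A--D the map $\Pi$ coincides with the coordinate-wise clamp $(b,d)\mapsto(\max(r_1,\min(r_2,b)),\,\max(r_1,\min(r_2,d)))$, and hence is $1$-Lipschitz for $L_\infty$, dispatches $\binom{4}{2}+4=10$ of the $21$ pairings in one stroke. The paper instead works through these one at a time: it first handles all pairings involving Case A (six), then all pairings involving Case B but not A (five), and then declares the remaining ten ``similar.'' Both proofs still need direct estimates for pairings involving E or F, where $\Pi$ departs from the clamp by projecting to a data-dependent diagonal point $(b,b)$ or $(d,d)$ rather than to a fixed corner; your representative E--A computation is correct and is in the same spirit as the paper's B--E and B--F cases. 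Your closing remark is also well taken and is a genuine simplification the paper leaves on the table: if Definition~\ref{Res} sent Case E points to $(r_2,r_2)$ and Case F points to $(r_1,r_1)$, then $\Pi$ would be the coordinate-wise clamp everywhere and hence globally $1$-Lipschitz, giving a one-line proof; since both images lie on the diagonal, this change does not affect the bottleneck distance between restricted diagrams, so nothing downstream would break.
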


\begin{proof}
Consider two points $p\in \DgPD{\Xbf}$ and $q\in \DgPD{\Ybf}$ achieving $\ds\sup_{(p,q)\in\mu} ||p-q||_\infty$. A case analysis of the 21 
possible pairings of points will establish the lemma. First, observe that if either $p$ or $q$ is in \textbf{Case A}, then after projecting onto the restricted region, at least one point is unchanged and at most one point is moved closer, yielding the desired inequality.

Next, we will consider the scenarios when one of the points, say (without loss of generality) $q=(b_q,d_q)$, belongs to \textbf{Case B}, so that $\Pi(q)=(b_q,r_2)$. If $p=(b_p,d_p)$ is also a \textbf{Case B} point, then the inequality holds because 
projecting the points does not change the horizontal distance and the vertical distance of the projection is 0.
In the case that $p=(b_p,d_p) \in \textbf{Case C}$, we have $\Pi(p)=(r_1,d_p)$ and $||\Pi(p)-\Pi(q)||_\infty=\max\{b_q-r_1,r_2-d_p\}$. Since $b_p \leq r_1 \leq d_p$ and $b_q \leq r_2 \leq d_q$, the horizontal distances satisfy $b_q-r_1 \leq b_q-b_p$ and the vertical distances satisfy $r_2-d_p \leq d_q - d_p$, so that the inequality holds. 
If $p=(b_p,d_p) \in \textbf{Case D}$, then $\Pi(p)=(r_1,r_2)$ and $||\Pi(p)-\Pi(q)||_\infty=b_q-r_1$, since the vertical distance between the projections is 0. This in turn is less than $b_q-b_p \leq ||p-q||_\infty$.
Now, if $p=(b_p,d_p) \in \textbf{Case E}$, we have $\Pi(p)=(b_p,b_p)$ and  $||\Pi(p)-\Pi(q)||_\infty=\max\{|b_p-b_q|,b_p-r_2\}$. Since $b_q \leq r_2\leq b_p$, this implies that the horizontal distance between the projections must be larger than the vertical distance. Therefore, $||\Pi(p)-\Pi(q)||_\infty=b_p-b_q \leq \max\{b_p-b_q,|d_p-d_q|\}=||p-q||_\infty$.
Finally, if $p=(b_p,d_p) \in \textbf{Case F}$, then $\Pi(p)=(d_p,d_p)$ and $||\Pi(p)-\Pi(q)||_\infty=\max\{b_q-d_p,r_2-d_p\}$. Since $b_p \leq d_p \leq r_1 \leq b_q \leq r_2 \leq d_q$, the horizontal distances satisfy $b_q-d_p \leq b_q-b_p$ and the vertical distances satisfy $r_2-d_p \leq d_q - d_p$, yielding the desired inequality.

The case analysis for the remaining pairings proceeds in a similar manner.
\end{proof}

Given an $\Rspace$-valued function, there is a natural construction of a {level set zigzag (LZZ) persistence module}~\cite{CarlssonSilvaMorozov2009} that sweeps its level sets from bottom to top~\cite{EdelsbrunnerMorozov2017}. 
Given a topological space $\Xspace$ and a continuous function $f : \Xspace \rightarrow \R$ of Morse type, let $\Xspace_t=f^{-1}(t)$ denote the \emph{level set} of $f$ for any $t\in\R$ and $\Xspace_I = f^{-1}(I)$ denote the \emph{slice} of $\Xspace$ which $f$ maps to the interval $I\subset \Rspace$. If $I=[a,b]$, we may denote this as $\Xspace_a^b$. 
Recall that $(\Xspace, f)$ is of \emph{Morse type} if, for the finite set of critical values $a_1<a_2<\ldots <a_n$ of $f$, the open intervals $(-\infty,a_1),(a_1,a_2),\ldots,(a_{n-1},a_n),(a_n,\infty)$ are such that for each interval $I$, $f^{-1}(I)$ is homeomorphic to $\Yspace\times I$ for some compact and locally connected space $\Yspace$ with $f$ serving as the projection onto $I$~\cite{CarlssonSilvaMorozov2009}. The homeomorphisms should extend to continuous functions on $\Yspace \times \bar{I}$, where $\bar{I}$ is the closure of $I$ in $\Rspace$, and each $\Xspace_t$ should also have finitely-generated homology. Then, given $(\Xspace, f)$ of Morse type with critical values $a_i$ as above, we choose arbitrary $s_i$ satisfying
\[ -\infty < s_0 < a_1 < s_1 < a_2 < \cdots < s_{n-1} < a_n < s_n < \infty.\]
The \emph{level set zigzag persistence} of $(\Xspace, f)$ is defined to be the zigzag persistence for the sequence
\[ \Xspace_{s_0}^{s_0} \rightarrow \Xspace_{s_0}^{s_1} \leftarrow \Xspace_{s_1}^{s_1} \rightarrow \Xspace_{s_1}^{s_2} \leftarrow \cdots \rightarrow \Xspace_{s_{n-1}}^{s_n} \leftarrow \Xspace_{s_n}^{s_n}.  \]
We denote the persistence diagram by $\DgPD{f}$. 

The level set zigzag persistence can be used to compute the ordinary persistent homology of an $\Rspace$-valued function with good space efficiency. 
In particular, the LZZ module is related to the ordinary (extended) persistence module via the Mayer-Vietoris pyramid~\cite[Figure 3]{CarlssonSilvaMorozov2009}, where the zigzag sequence and the ordinary sequence are shown to contain the same information in their persistent homology. 
Therefore, we could use the algorithm for zigzag persistent homology to compute extended persistence, while using space that depends only on the size of the largest level set instead of the entire domain~\cite{CarlssonSilvaMorozov2009, MilosavljevicMorozovSkraba2011}.

We now state a straightforward corollary to Theorem \ref{thm:changeradius} which we will use in Section \ref{sec:applications}.

\begin{cor}\label{cor:levelsetzigzag}
Let $f:\Xspace \to \Rspace$ and $g:\Yspace \to \Rspace$ be Morse type functions defined on topological spaces $\Xspace$ and $\Yspace$, and for an interval $I=[r_1,r_2]$, let $\DgPD{f^I}$ and $\DgPD{g^I}$ be the restrictions of the LZZ persistence diagrams $\DgPD{f}$ and $\DgPD{g}$ to the interval $I$. Then $d_B(\DgPD{f^I},\DgPD{g^I}) \leq d_B(\DgPD{f},\DgPD{g}).$
\end{cor}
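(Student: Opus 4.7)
The plan is to reduce the corollary to a direct application of Theorem~\ref{thm:changeradius}. By construction, the LZZ persistence diagrams $\DgPD{f}$ and $\DgPD{g}$ are just the zigzag persistence diagrams of the specific zigzag diagrams of topological spaces
\[\Xspace_{s_0}^{s_0} \rightarrow \Xspace_{s_0}^{s_1} \leftarrow \Xspace_{s_1}^{s_1} \rightarrow \cdots \leftarrow \Xspace_{s_n}^{s_n}\]
(and analogously for $g$, with its own interleaving sequence of non-critical values). First I would identify these zigzag diagrams as the hypothesis sequences $\Xbf$ and $\Ybf$ of Theorem~\ref{thm:changeradius}, and observe that the restriction construction of Definition~\ref{Res}, together with the projection map $\Pi$, depends only on the persistence diagrams and the interval $I$, not on any particular interpretation of the parameter axis. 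Hence the restrictions $\DgPD{f^I}$ and $\DgPD{g^I}$ are instances of $\DgPD{\Xbf^I}$ and $\DgPD{\Ybf^I}$.

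A technical point to handle is that the Morse-type functions $f$ and $g$ may have different numbers of critical values, so the two zigzag sequences would not have the same length $n$ as stated in Theorem~\ref{thm:changeradius}. I would dispose of this by either refining the $s_i$-choices of each function by inserting additional non-critical separating values until both sequences have matching length, or simply noting that the proof of Theorem~\ref{thm:changeradius} and Lemma~\ref{lem:suprestriction} only ever manipulates points of the two persistence diagrams through $\Pi$ and the $L_\infty$ distance, so the common-length assumption is not actually used in the argument.

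With these observations in place, the corollary follows immediately: any partial matching $\mu$ between $\DgPD{f}$ and $\DgPD{g}$ pushes forward to a partial matching $\hat{\mu}$ between $\DgPD{f^I}$ and $\DgPD{g^I}$ of cost at most that of $\mu$, and taking the infimum over $\mu$ yields $d_B(\DgPD{f^I},\DgPD{g^I}) \leq d_B(\DgPD{f},\DgPD{g})$. I do not expect any substantive obstacle here; the entire content is the recognition that LZZ modules are a particular instance of zigzag modules, so Theorem~\ref{thm:changeradius} applies verbatim.
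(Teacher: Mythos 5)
Your proposal is correct and is exactly the argument the paper intends: the paper offers no proof at all, treating the corollary as an immediate specialization of Theorem~\ref{thm:changeradius} to the LZZ zigzag sequences, which is precisely what you do. Your extra remark about the two sequences possibly having different lengths is a sensible bit of care the paper silently omits, and your resolution (the proof only manipulates diagram points via $\Pi$ and the $L_\infty$ norm) is right.
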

\section{Applications to Metric Graphs and $d$-Parameter Persistence}
\label{sec:applications}

For uses of Corollary \ref{cor:levelsetzigzag}, we turn to the \emph{metric graph} setting. 
Metric graphs commonly arise when studying road networks as well as biological or chemical structure graphs.
Given a graph $G$ with a set of vertices and edges, a length function on the edges, and a geometric realization $|G|$ of the graph, one may specify a metric on $G$ by taking the minimum length of any path between any pair of points (not necessarily vertices) in the geometric realization.
Given a base point $v \in |G|$, the \emph{geodesic distance function} $f_v : |G| \rightarrow \R$ is given by $f_v(x) = d_G(v,x)$. Then $\Dg{f_v}$ denotes the $0$-dimensional LZZ persistence diagram induced by $f_v$. Equivalently, $\Dg{f_v}$ is the union of the $0$- and $1$-dimensional \emph{extended persistence diagrams} for $f_v$ (see \cite{cohen2009extending} for the details of extended persistence).
Corollary \ref{cor:levelsetzigzag} can be used to compare local neighborhoods of two different metric graphs, $G_1$ and $G_2$, with base points $v \in G_1$ and $u \in G_2$. 
In particular, given $f_v : |G_1| \rightarrow \mathbb{R}$ and $g_u : |G_2| \rightarrow \mathbb{R}$, we have $d_B(\DgPD{f_v}^I,\DgPD{g_u}^I) \leq d_B(\DgPD{f_v},\DgPD{g_u})$ for any real interval $I$. 
Typically, for comparing local neighborhoods, $I=[0, r]$.
The following corollary gives a stability-type result for comparing two local neighborhoods within a single metric graph. 

\begin{cor}\label{cor:changebasepoint}
Let $G$ be a metric graph with geometric realization $|G|$. For a fixed interval $I$ and points $u,v\in|G|$, we have $d_B(\DgPD{f_u^I},\DgPD{f_v^I}) \leq d_G(u,v)$.
\end{cor}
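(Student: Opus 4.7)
The plan is to chain together two bounds: a stability-type bound on the global (unrestricted) level set zigzag persistence diagrams, followed by our main restriction inequality from Corollary~\ref{cor:levelsetzigzag}.

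First I would bound the sup-norm difference between the two geodesic distance functions. Since $f_u(x) = d_G(u,x)$ and $f_v(x) = d_G(v,x)$, the triangle inequality on the metric $d_G$ yields $|f_u(x) - f_v(x)| = |d_G(u,x) - d_G(v,x)| \leq d_G(u,v)$ for every $x \in |G|$, hence $\|f_u - f_v\|_\infty \leq d_G(u,v)$. Both $f_u$ and $f_v$ are $1$-Lipschitz continuous functions on the finite metric graph, which (after verifying the genericity/Morse-type hypothesis, perhaps by standard perturbation arguments) places us in the setting of Corollary~\ref{cor:levelsetzigzag}.

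Next I would invoke the stability theorem for level set zigzag persistence of Morse-type functions, established in~\cite{CarlssonSilvaMorozov2009}, which states that $d_B(\DgPD{f_u}, \DgPD{f_v}) \leq \|f_u - f_v\|_\infty$. Combining this with the above bound gives $d_B(\DgPD{f_u}, \DgPD{f_v}) \leq d_G(u,v)$. Finally, applying Corollary~\ref{cor:levelsetzigzag} to restrict to the interval $I$ yields
\[
d_B(\DgPD{f_u^I}, \DgPD{f_v^I}) \;\leq\; d_B(\DgPD{f_u}, \DgPD{f_v}) \;\leq\; \|f_u - f_v\|_\infty \;\leq\; d_G(u,v),
\]
which is the desired inequality.

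The main obstacle is the justification of the global stability bound $d_B(\DgPD{f_u}, \DgPD{f_v}) \leq \|f_u - f_v\|_\infty$ in the level set zigzag setting. For ordinary sublevel set persistence this is the classical stability theorem, and since level set zigzag persistence encodes the same information as extended persistence through the Mayer--Vietoris pyramid, the analogous stability statement transfers; one must, however, take care that the interleaving induced by sup-norm closeness of $f_u$ and $f_v$ is compatible with the zigzag structure (and that $f_u$, $f_v$ can be treated as Morse type on $|G|$, possibly after a small perturbation that does not affect the final inequality by continuity of $d_B$). Once this stability step is in hand, the remainder of the proof is a short chain of inequalities.
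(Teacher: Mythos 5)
Your proof is essentially identical to the paper's: both chain the restriction inequality of Corollary~\ref{cor:levelsetzigzag}, the LZZ stability theorem of~\cite{CarlssonSilvaMorozov2009}, and the triangle inequality $|d_G(x,u)-d_G(x,v)|\le d_G(u,v)$ to get $\|f_u-f_v\|_\infty\le d_G(u,v)$. The only addition is your remark about verifying the Morse-type hypothesis via perturbation, which the paper tacitly assumes; this is a reasonable but nonessential caveat.
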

\begin{proof}
By Corollary \ref{cor:levelsetzigzag}, $d_B(\DgPD{f_u}^I,\DgPD{f_v}^I) \leq d_B(\DgPD{f_u},\DgPD{f_v})$. Since $f_u,f_v : |G| \to \mathbb{R}$ are two Morse type functions, $d_B(\DgPD{f_u},\DgPD{f_v})\leq || f_u-f_v ||_\infty$ by the LZZ  Stability Theorem of \cite{CarlssonSilvaMorozov2009}. 
Furthermore, by the triangle inequality, 
for any $x\in |G|$, $|d_G(x, u) - d_G(x, v)| \le d_G(u, v)$, meaning that $||f_u-f_v||_\infty \leq d_G(u,v)$. 
Putting everything together proves the claim. 
\end{proof}

Another application of Corollary \ref{cor:levelsetzigzag} is as follows.
Define $\Phi : |G| \rightarrow SpDg$, $\Phi(v) = \DgPD{f_v},$ where $SpDg$ denotes the space of persistence diagrams. 
Given metric graphs $(G_1, d_{G_1})$ and $(G_2, d_{G_2})$, their \emph{persistence distortion distance} \cite{DeyShiWang2015} is
\[ d_{PD}(G_1, G_2) := d_H(\Phi(|G_1|), \Phi(|G_2|)),\]
where $d_H$ denotes the Hausdorff distance. 
In other words,
\[ 
d_{PD}(G_1, G_2) = \max \left\{ \sup_{D_1 \in \Phi(|G_1|)} \infd_{D_2 \in \Phi(|G_2|)} d_B(D_1, D_2),  \sup_{D_2 \in \Phi(|G_2|)} \infd_{D_1 \in \Phi(|G_1|)} d_B(D_1, D_2)\right\}. 
\]
Note that the diagram $\DgPD{f_v}$ contains both $0$-  and $1$-dimensional persistence points, but only points of the same dimension are matched under the bottleneck distance.
A local version of the persistence distortion distance, which we will denote by $d_{PD}^r$, may be defined as follows: for each base point $v$, only consider the distance function to points within a fixed intrinsic radius $r$.

 \begin{cor}
 \label{cor:localpd}
 If $r \leq r'$, then $d_{PD}^r(G_1, G_2) \leq d_{PD}^{r'}(G_1, G_2).$
 \end{cor}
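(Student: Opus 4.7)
The plan is to reduce the statement to a pointwise inequality on bottleneck distances and then propagate that inequality through the sup/inf/max structure of the Hausdorff distance. Unpacking the definition, $d_{PD}^r(G_1, G_2)$ is the Hausdorff distance, with respect to $d_B$, between the sets $\Phi^r(|G_1|) = \{\DgPD{f_v^{[0,r]}} : v \in |G_1|\}$ and $\Phi^r(|G_2|) = \{\DgPD{f_u^{[0,r]}} : u \in |G_2|\}$, and likewise for $r'$.

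The key pointwise claim is that for every pair $v \in |G_1|$, $u \in |G_2|$ and every $r \leq r'$,
\[
d_B\bigl(\DgPD{f_v^{[0,r]}},\DgPD{f_u^{[0,r]}}\bigr) \;\leq\; d_B\bigl(\DgPD{f_v^{[0,r']}},\DgPD{f_u^{[0,r']}}\bigr).
\]
To establish this, I would first observe that restriction is compatible with itself: if $\Xbf$ is a zigzag module and $J \subseteq I$ are intervals, then equation~(\ref{eqn:restrictioin}) gives $(\Xbf^I)^J \cong \bigoplus_j \Ispace(([b_j,d_j]\cap I)\cap J) = \bigoplus_j \Ispace([b_j,d_j]\cap J) \cong \Xbf^J$, so $\DgPD{(\Xbf^I)^J} = \DgPD{\Xbf^J}$. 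Applying this to the LZZ modules of $f_v$ and $f_u$ with $I = [0,r']$ and $J = [0,r]$, the desired inequality is precisely the conclusion of Theorem~\ref{thm:changeradius} applied to the zigzag modules $\Xbf^{[0,r']}$ and $\Ybf^{[0,r']}$ with the sub-interval $[0,r]$.

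Having the pointwise bound, I would then propagate it: for each fixed $v \in |G_1|$, taking the infimum over $u \in |G_2|$ preserves the inequality, and then taking the supremum over $v \in |G_1|$ preserves it again, yielding
\[
\sup_{v \in |G_1|} \infd_{u \in |G_2|} d_B\bigl(\DgPD{f_v^{[0,r]}},\DgPD{f_u^{[0,r]}}\bigr) \;\leq\; \sup_{v \in |G_1|} \infd_{u \in |G_2|} d_B\bigl(\DgPD{f_v^{[0,r']}},\DgPD{f_u^{[0,r']}}\bigr).
\]
The symmetric sup/inf term is handled identically, and taking the maximum of the two sides preserves the bound, delivering $d_{PD}^r(G_1,G_2) \leq d_{PD}^{r'}(G_1,G_2)$.

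The only subtle step is the compatibility of iterated restriction; this is where one must resist the temptation to apply Corollary~\ref{cor:levelsetzigzag} directly to $f_v$ and $f_u$ (which only bounds $d_B$ of the restriction by $d_B$ of the \emph{unrestricted} diagrams, not by a larger restriction). Working at the module level via equation~(\ref{eqn:restrictioin}) sidesteps any need to reinterpret $f_v^{[0,r']}$ as a Morse-type function on a subspace, so the rest of the argument is essentially bookkeeping.
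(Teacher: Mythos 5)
Your proof is correct and follows essentially the same route as the paper: establish the pointwise bottleneck inequality by viewing the $[0,r]$-restricted diagram as a restriction of the $[0,r']$-restricted diagram (so that Theorem~\ref{thm:changeradius} applies), then propagate the bound through the sup/inf/max structure of the Hausdorff distance. You helpfully make explicit the compatibility of iterated restriction via equation~(\ref{eqn:restrictioin}), a step the paper's proof compresses into the phrase ``viewing $D_i^r$ as a restriction of $D_i^{r'}$.''
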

 
 \begin{proof} Let $D_1^{r}$ be the persistence diagram for some base point $v \in |G_1|$,  where the geodesic distance function is computed in the interval $[0,r]$.  Let $D_1^{r'}$ be the persistence diagram for the same base point, but where the distance function is computed in the interval $[0,r']$.  Define $D_2^{r}$ and $D_2^{r'}$ similarly for some base point in $|G_2|$.  By viewing $D_i^{r}$ as a restriction of $D_i^{r'}$ for $i=1,2$, we can apply Theorem \ref{thm:changeradius} to show that $d_B(D_1^{r},D_2^{r}) \leq d_B(D_1^{r'},D_2^{r'})$.  Since our choice of base points was arbitrary, this inequality holds for persistence diagrams across all choices of base points in $|G_1|$ and $|G_2|$.  Therefore, using the definition of the local version of the persistence distortion distance, we can conclude that $d_{PD}^r(G_1, G_2) \leq d_{PD}^{r'}(G_1, G_2).$
\end{proof}

%




We end with a final remark on how Theorem~\ref{thm:changeradius} can be applied to a $d$-parameter persistence module on any topological space (not restricted to the level set or metric graph settings). 
A $d$-parameter persistence module is indexed by a $d$-dimensional family of vector spaces, $\{\Vspace_u\}_{u\in\Rspace^d}$, together with a family of linear maps $\{\varphi_\Vspace(u,v):\Vspace_u \to \Vspace_v\}_{u\preceq v}$ such that for $u\preceq v \preceq w \in \Rspace^d$, we have $\varphi_\Vspace(u,u)=\id_{\Vspace_u}$ and $\varphi_\Vspace(u,w) \circ \varphi_\Vspace(u,v)=\varphi_\Vspace(u,w)$ \cite{Carlsson2009}. Here, $u\preceq v$ if and only if $u_i \leq v_i$ for $i=1,\ldots,d$. 
Any line $L$ in the set of all lines of $\mathbb{R}^d$ with direction $\textbf{m}=(m_1,\ldots,m_d)$ such that $\ds\min_i m_i$ is strictly positive gives a one-parameter slice of the $d$-parameter persistence module. 
Given two $d$-parameter persistence modules $\Xbf$ and $\Ybf$, we define their \emph{matching distance} \cite{Landi2018} to be
\[d_{match}(\Xbf, \Ybf):=\displaystyle \sup_{L} \displaystyle \min_i m_i d_B(\Dg \Xbf_{L}, \Dg \Ybf_{L}),\]
where $\Dg \Xbf_{L}$ and $\Dg \Ybf_{L}$ are the persistence diagrams of the $d$-parameter persistence modules $\Xbf$ and $\Ybf$ restricted along line $L$.  
Our result extends naturally to this linear relationship between these two parameters. 
Indeed, if we restrict both $d$-parameter persistence modules to a region $\Ispace=I_1 \times \cdots \times I_d$, where each $I_i$ is an interval of the real line, then Theorem \ref{thm:changeradius} implies the following corollary.
\begin{cor}
\[ d_{match}(\Xbf^\Ispace, \Ybf^\Ispace) \leq d_{match}(\Xbf, \Ybf)\]
where $d_{match}(\Xbf^\Ispace, \Ybf^\Ispace)$ is computed by restricting $\Dg{\Xbf_{L}}$ and $\Dg{\Ybf_{L}}$ to the subinterval of the line $L$ passing through the region $\Ispace$.
\end{cor}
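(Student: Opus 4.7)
The plan is to reduce the $d$-parameter statement to the $1$-parameter result by slicing along lines and invoking Theorem~\ref{thm:changeradius}. Fix a line $L \subset \Rspace^d$ with positive direction vector $\textbf{m} = (m_1, \ldots, m_d)$. The key observation is that restriction and slicing commute: restricting $\Xbf$ to the box $\Ispace = I_1 \times \cdots \times I_d$ and then slicing by $L$ produces the same $1$-parameter module as first slicing $\Xbf$ by $L$ to obtain $\Xbf_L$ and then restricting $\Xbf_L$ to $L \cap \Ispace$. This is because the slice along $L$ depends only on the vector spaces $\Vspace_u$ and maps $\varphi_\Xbf(u,v)$ for $u, v \in L$, while restriction to $\Ispace$ merely discards the parameter values lying outside $\Ispace$.

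Since $\Ispace$ is a product of intervals, its intersection with $L$ is either empty or a closed subinterval $I_L$ of $L$. In the nonempty case, the $1$-parameter version of Theorem~\ref{thm:changeradius} applied to the modules $\Xbf_L$ and $\Ybf_L$ with restriction to $I_L$ gives
\[ d_B(\Dg \Xbf_L^{I_L}, \Dg \Ybf_L^{I_L}) \;\leq\; d_B(\Dg \Xbf_L, \Dg \Ybf_L). \]
Multiplying both sides by the positive scalar $\min_i m_i$ and then taking the supremum over all admissible lines $L$ yields the desired inequality $d_{match}(\Xbf^\Ispace, \Ybf^\Ispace) \leq d_{match}(\Xbf, \Ybf)$. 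When $L \cap \Ispace = \emptyset$, the restricted diagrams are empty, the bottleneck distance is zero, and the line contributes trivially to the supremum on the left-hand side, so the inequality is preserved.

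The main obstacle I expect is bookkeeping around the commutation of restriction and slicing, and confirming that Theorem~\ref{thm:changeradius} applies in the algebraic $1$-parameter setting of $\Xbf_L$ (rather than to a concrete zigzag of topological spaces as originally stated). Since the proof of Theorem~\ref{thm:changeradius} only uses interval decomposability of the module and the projection map $\Pi$ on the resulting diagrams, and since tame $1$-parameter modules indexed along $L$ admit such decompositions, this transfer is essentially formal but should be stated with care. A secondary subtlety is ensuring that the supremum over lines $L$ on the restricted side is taken over the same class of admissible directions as on the unrestricted side, which follows because the admissibility condition $\min_i m_i > 0$ is a property of the direction vector alone and is unaffected by restricting to $\Ispace$.
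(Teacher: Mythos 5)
Your proposal is correct and follows essentially the same approach as the paper: fix an admissible line $L$, apply Theorem~\ref{thm:changeradius} to the restricted one-parameter modules $\Xbf_L$ and $\Ybf_L$ to obtain $d_B(\Dg \Xbf_L^{I_L}, \Dg \Ybf_L^{I_L}) \leq d_B(\Dg \Xbf_L, \Dg \Ybf_L)$, multiply by the positive weight $\min_i m_i$, and pass to the supremum over $L$. The paper formalizes the final supremum step via an $\epsilon$-argument and is briefer about the commutation of slicing and restriction, the empty-intersection case, and the transfer of Theorem~\ref{thm:changeradius} to the purely algebraic setting, but these are the same observations you make, handled at a slightly different level of explicitness.
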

\begin{proof}
For a fixed line $L$ with direction $\textbf{m}$, consider a region $\Ispace$ restricted to $L$, denoted $I_L \subset \Ispace \cap L$. 
Recall that $\Dg \Xbf_{L}$ and $\Dg \Ybf_{L}$ are the persistence diagrams of the $d$-parameter persistence modules $\Xbf$ and $\Ybf$ restricted along the line $L$.  
Based on Theorem \ref{thm:changeradius},
\begin{align}\label{eqn:match_one_line}
d_B(\DgPD{\Xbf_L^{I_L}},\DgPD{\Ybf_L^{I_L}}) \leq d_B(\DgPD{\Xbf_L},\DgPD{\Ybf_L}).
\end{align}
From the definition of supremum, we know that $\forall \epsilon > 0$, there is a line $L_\epsilon$ such that
\[ d_{match}(\Xbf^\Ispace, \Ybf^\Ispace) -\epsilon < \min_i m_i d_B(\Dg \Xbf_{L_\epsilon}^{I_{L_\epsilon}}, \Dg \Ybf_{L_\epsilon}^{I_{L_\epsilon}}). \]
Using observation \eqref{eqn:match_one_line} above, we see that 
\[ d_{match}(\Xbf^\Ispace, \Ybf^\Ispace) -\epsilon < \min_i m_i d_B(\DgPD{\Xbf_{L_\epsilon}},\DgPD{\Ybf_{L_\epsilon}}). \]
The right-hand side is, of course, less than the supremum over all lines $L$, the definition of $d_{match}(\Xbf, \Ybf)$.
Hence, for every $\epsilon > 0$, we have
$d_{match}(\Xbf^\Ispace, \Ybf^\Ispace) -\epsilon < d_{match}(\Xbf, \Ybf)$; in other words,
$d_{match}(\Xbf^\Ispace, \Ybf^\Ispace) \leq d_{match}(\Xbf, \Ybf)$,
as desired.
\end{proof}


\section{Discussion}
\label{sec:conclusion}


Theorem \ref{thm:changeradius} and its corollaries provide explicit relationships between distances computed locally and globally, and the resulting inequalities are very broadly applicable.
For instance, the fact that the local bottleneck distance is bounded above by the global bottleneck distance allows for a single global computation to potentially rule out local differences if the global distance is low.
If looking for local differences, starting with a global computation may save computational time if there are too many local comparisons to make.
On the other hand, the global bottleneck distance being bounded below by the local version allows smaller computations to approach the global truth, while perhaps being more computationally tractable.

In future work, we would like to extend these ideas to generalized persistence, where instead of a linear sequence of topological spaces one considers topological spaces and transformations that form a poset. 
In contrast to zigzag persistence, this generalized persistence does not have the notion of a persistence diagram.
Instead, we would need to restate our results in terms of the interleaving distance between persistence modules.
Moreover, a notion of ``local'' would have to be defined in the poset setting.\\

\bibliographystyle{plain}
\bibliography{local-global}

\end{document}